\theoremstyle{plain}
\newtheorem{theorem}{Theorem}[section]
\newtheorem{lemma}[theorem]{Lemma}
\newtheorem{proposition}[theorem]{Proposition}
\newtheorem{conjecture}[theorem]{Conjecture}
\theoremstyle{definition}
\newtheorem{definition}[theorem]{Definition}
\theoremstyle{remark}
\newtheorem{remark}[theorem]{Remark}
\newcommand{\Q}{\mathbb{Q}}
\newcommand{\Z}{\mathbb{Z}}
\newcommand{\C}{\mathbb{C}}
\renewcommand{\H}{\mathbb{H}}
\newcommand{\zxz}[4]{\begin{pmatrix} #1 & #2 \\ #3 & #4 \end{pmatrix}}
\newcommand{\abcd}{\zxz{a}{b}{c}{d}}
\newcommand{\kzxz}[4]{\left(\begin{smallmatrix} #1 & #2 \\ #3 & #4\end{smallmatrix}\right) }
\newcommand{\calD}{\mathcal{D}}
\newcommand{\calL}{\mathcal{L}}
\newcommand{\calM}{\mathcal{M}}
\newcommand{\calO}{\mathcal{O}}
\newcommand{\bs}{\backslash}
\newcommand{\tr}{\operatorname{tr}}
\newcommand{\Sl}{\operatorname{SL}}
\newcommand{\Gl}{\operatorname{GL}}
\newcommand{\Sp}{\operatorname{Sp}}
\newcommand{\Orth}{\operatorname{O}}
\newcommand{\Hom}{\operatorname{Hom}}
\newcommand{\Sym}{\operatorname{Sym}}
\newcommand{\GL}{\operatorname{GL}}
\newcommand{\CH}{\operatorname{CH}}
\newcommand{\ord}{\operatorname{ord}}
\begin{document}

\title[Vector valued formal Fourier-Jacobi series]{Vector valued formal Fourier-Jacobi series}

\author[Jan H.~Bruinier]{Jan
Hendrik Bruinier}
\address{Fachbereich Mathematik,
Technische Universit\"at Darmstadt, Schlossgartenstrasse 7, D--64289
Darmstadt, Germany}
\email{bruinier@mathematik.tu-darmstadt.de}
\subjclass[2010]{11F46, 11F50}

\thanks{The author is partially supported by DFG grants BR-2163/2-2 and FOR 1920.}

\date{\today}

\begin{abstract}
H.~Aoki showed that any symmetric formal Fourier-Jacobi series for the symplectic group $\Sp_2(\Z)$ is the Fourier-Jacobi expansion of a holomorphic Siegel modular form. We prove an analogous result for vector valued symmetric formal Fourier-Jacobi series, by combining Aoki's theorem with facts about vector valued modular forms.
Recently, this result was also proved independently by M.~Raum using a different approach.
As an application, by means of work of W.~Zhang, modularity results for special cycles of codimension $2$ on Shimura varieties associated to orthogonal groups can be derived.
\end{abstract}

\maketitle

\section{Introduction}
\label{sect:intro}

Let $\Gamma^{(g)}$ be the metaplectic extension of the integral symplectic group $\Sp_g(\Z)$ of genus $g$.
The elements of $\Gamma^{(g)}$ are pairs $(M,\alpha)$, where  $M=\kzxz{A}{B}{C}{D}\in \Sp_g(\Z)$, and $\alpha$ is a holomorphic function on the Siegel upper half plane $\H_g$ such that $\alpha(Z)^2= \det(CZ+D)$. The product of two elements $(M_1,\alpha_1)$ and $(M_2,\alpha_2)$ is defined by
\[
(M_1,\alpha_1(Z))\cdot (M_2,\alpha_2(Z)) = (M_1M_2,\alpha_1(M_2Z)\alpha_2(Z)).
\]

Let $k\in \frac{1}{2}\Z$, and let
\begin{align}
\label{eq:rho}
\rho:\Gamma^{(g)} \longrightarrow \Gl(V_\rho)
\end{align}
be a unitary representation on a finite dimensional complex vector space $V_\rho$.  Throughout we assume that $\rho$ is trivial on some congruence subgroup of sufficiently large level.
We denote by $M_k^{(g)}(\rho)$ the vector space of holomorphic Siegel modular forms for the group $\Gamma^{(g)}$ of weight $k$ with representation $\rho$, that is, the space of holomorphic functions
$f:\H_g\to V_\rho$ on $\H_g$
satisfying the transformation law
\begin{align}
\label{siegeltrafo}
f(M Z)= \alpha(Z)^{2k}\rho(M,\alpha)f(Z)
\end{align}
for all $(M,\alpha)
\in \Gamma^{(g)}$ (and which are holomorphic at the cusp $\infty$ if $g=1$), see \cite{Fr}.
For the trivial representation $\rho_0$ on $\C$, we briefly write $M_k^{(g)}$ instead of $M_k^{(g)}(\rho_0)$.
Note that the transformation law for $(1,-1)\in \Gamma^{(g)}$ implies that $M_k^{(g)}=0$ if $k$ is not integral.

In the present note we are mainly interested in the case $g=2$.
If we write the variable $Z\in \H_2$ as
\begin{align}
Z=\zxz{\tau}{z}{z}{\tau'}
\end{align}
with $\tau,\tau'\in \H_1$ and $z\in \C$, then any $f\in M_k^{(2)}(\rho)$ has a Fourier-Jacobi expansion of the form
\begin{align}
\label{fjexp}
f(Z)= \sum_{m\geq 0} \phi_m(\tau,z) q'{}^m,
\end{align}
where $q'=e^{2\pi i \tau'}$, and the
coefficients $\phi_m$ are Jacobi forms of weight $k$, index $m$, with
representation $\rho$. To explain this more precisely, recall that the
(metaplectic) Jacobi group $\Gamma^J=\Gamma^{(1)}\ltimes \Z^2$ acts on
the Jacobi half plane $\H_1\times \C$ in the usual way, see
\cite{EZ}. Moreover, there is an embedding $\Sl_2(\Z)\ltimes \Z^2\to \Sp_2(\Z)$,
\begin{align}
\label{jacemb}
\left(\abcd,(\lambda,\mu)\right)\longmapsto \begin{pmatrix}
a&0&b& a\mu-b\lambda \\
\lambda & 1 & \mu & \lambda\mu\\
c & 0 & d & c\mu -d\lambda \\
0 & 0 & 0 & 1
\end{pmatrix},
\end{align}
which is compatible with the group actions on $\H_1\times\C$ and on
$\H_2$.  It lifts to an embedding $j:\Gamma^J\to \Gamma^{(2)}$, taking
the cocycle $\alpha$ to itself.  The restriction of $\rho$ defines a
representation of the metaplectic Jacobi group.  A holomorphic
function $\phi:\H_1\times \C\to V_\rho$ is called a Jacobi form for
$\Gamma^{J}$ of weight $k$ and index $m$ with representation $\rho$, if
the function
\begin{align}
\label{jactrafo}
\tilde \phi(Z) = \phi(\tau,z)q'{}^m
\end{align}
on $\H_2$ satisfies the transformation law \eqref{siegeltrafo} for all
$(M,\alpha)\in j(\Gamma^J)$, and $\phi$ is holomorphic at $\infty$.  Here the latter
condition means that the Fourier expansion of $\phi$ has the form
\begin{align}
\label{fourierphi}
\phi(\tau,z)=\sum_{\substack{n\in \Q\\ n\geq 0}}\sum_{\substack{r\in \Q\\ r^2\leq 4mn}} c(\phi; n,r)q^n \zeta^r
\end{align}
with $q=e^{2\pi i \tau}$, $\zeta=e^{2\pi i z}$, and $c(\phi; n,r)\in
V_\rho$. Because of our assumption that $\rho$ be trivial on some
congruence subgroup, the coefficients $c(\phi; n,r)$ are supported on
rational numbers with bounded denominators. We write $J_{k,m}(\rho)$
for the vector space of Jacobi forms for $\Gamma^{J}$ of weight $k$,
index $m$, with representation $\rho$.

The element
\begin{align}
\delta= \left( \begin{pmatrix}
0& 1& &\\
1& 0& &\\
& & 0 & 1\\
& & 1 & 0
\end{pmatrix}, i\right)\in \Gamma^{(2)}
\end{align}
acts on $\H_2$ by $Z=\kzxz{\tau}{z}{z}{\tau'}\mapsto\kzxz{\tau'}{z}{z}{\tau} $.
The transformation law \eqref{siegeltrafo} implies that the Fourier-Jacobi coefficients \eqref{fjexp}
of any $f\in M_k^{(2)}(\rho)$  satisfy the symmetry relation
\begin{align}
\label{fjsymmetry}
c(\phi_m;n,r)=i^{2k}\rho\left(\delta \right) c(\phi_n;m,r).
\end{align}
This motivates the following definition.

\begin{definition}
A {\em formal Fourier-Jacobi series} for the group $\Gamma^{(2)}$ of weight $k$ with representation $\rho$ is a formal series
\begin{align}
f=\sum_{m\geq 0} \phi_m(\tau,z) q'{}^m
\end{align}
whose coefficients $\phi_m(\tau,z)$ belong to $J_{k,m}(\rho)$.
We denote the vector space of such formal Fourier-Jacobi series by $N^{(2)}_k(\rho)$.
We call $f\in N^{(2)}_k(\rho)$ {\em symmetric} if its coefficients satisfy
\eqref{fjsymmetry}  for all triples $m,n,r$.
\end{definition}

For the trivial representation $\rho_0$ on $\C$, we briefly write $N_k^{(2)}$ instead of $N_k^{(2)}(\rho_0)$.
The above considerations show that
the Fourier-Jacobi expansion of any $f\in M^{(2)}_k(\rho)$ is a symmetric Fourier-Jacobi series. In the present note we consider the following converse.

\begin{theorem}
\label{thm:main}
Every symmetric $f\in N_k^{(2)}(\rho)$
is the Fourier-Jacobi expansion of some Siegel modular form in
$M^{(2)}_k(\rho)$. In particular, $f$ converges absolutely.
\end{theorem}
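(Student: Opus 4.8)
The plan is to reduce the vector valued assertion to Aoki's scalar valued theorem by pairing $f$ against genuine holomorphic vector valued Siegel modular forms for the contragredient representation, and then to reconstruct $f$ from the resulting scalar forms. The guiding principle is that the statement really has two parts: the \emph{analytic} content that the formal series converges, and the \emph{algebraic} content that its limit is modular. For the second part I would use that $\Sp_2(\Z)$, and hence $\Gamma^{(2)}$, is generated by the image $j(\Gamma^J)$ of the embedded Jacobi group together with the involution $\delta$. By definition every $\phi_m$ lies in $J_{k,m}(\rho)$, so the formal series $f$ already transforms correctly under $j(\Gamma^J)$, while the symmetry relation \eqref{fjsymmetry} is precisely invariance under $\delta$. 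Consequently, once $f$ is known to converge to a holomorphic function on $\H_2$, the transformation law \eqref{siegeltrafo} holds on a generating set and therefore for all of $\Gamma^{(2)}$, so that $f\in M_k^{(2)}(\rho)$. Thus the whole difficulty is concentrated in the convergence.

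To attack convergence I would exploit that $\rho$ is unitary and factors through a finite quotient, so that the contragredient $\rho^*$ is again of the type considered here. Choose a half-integral weight $l$ with $k+l\in\Z$ and a family $g_1,\dots,g_d\in M_l^{(2)}(\rho^*)$ of genuine holomorphic Siegel modular forms, where $d=\dim V_\rho$. Writing $g_i=\sum_j g_{ij}\,e_j^*$ in a fixed dual basis and $f=\sum_j f_j e_j$, the natural pairing $h_i:=\langle f,g_i\rangle=\sum_j f_j g_{ij}$ is, coefficientwise in $q'$, a scalar valued formal Fourier-Jacobi series of weight $k+l$ for the trivial representation: the product of a Jacobi form of index $a$ for $\rho$ with one of index $b$ for $\rho^*$ lies in $J_{k+l,a+b}(\rho_0)$, because the canonical pairing $V_\rho\otimes V_\rho^*\to\C$ is $\Gamma^{(2)}$-equivariant. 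Moreover $h_i$ is symmetric, since the symmetry relation is multiplicative under the swap $\tau\leftrightarrow\tau'$ and both factors are symmetric, one by hypothesis and one because $g_i$ is a genuine modular form. By Aoki's theorem each $h_i$ then converges absolutely and lies in $M_{k+l}^{(2)}$; in particular the $h_i$ are honest holomorphic functions.

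It remains to invert the relation $h=Gf$, where $G=(g_{ij})$ is the $d\times d$ matrix of genuine holomorphic functions and $h=(h_i)$, $f=(f_j)$. On the nonempty open set $U=\{\,Z\in\H_2:\det G(Z)\neq 0\,\}$ one has $f=G^{-1}h$, which is holomorphic and shows that the formal series $f$ converges on $U$. Since $\det(G)\cdot f_j$ agrees with a cofactor combination of the genuine forms $h_i$, each $f_j$ extends to a meromorphic function on all of $\H_2$, with polar locus contained in the divisor of $\det G$. Running the generation argument of the first paragraph on $U$ shows that this meromorphic extension is a meromorphic Siegel modular form of weight $k$ with representation $\rho$; as it possesses a Fourier-Jacobi expansion $\sum_{m\ge0}\phi_m q'{}^m$ with holomorphic coefficients and no negative powers of $q'$, a Koecher-type argument together with the propagation of convergence along the $q'$-expansion forces it to be holomorphic, and the series converges on all of $\H_2$. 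This yields the desired Siegel modular form and, simultaneously, the absolute convergence asserted in the theorem.

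The crux of the argument, and the place where genuine input beyond Aoki's theorem enters, is the existence of the separating family $g_1,\dots,g_d$: one must produce enough holomorphic vector valued Siegel modular forms of type $\rho^*$ so that $\det G\not\equiv 0$, equivalently so that the evaluation map $M_l^{(2)}(\rho^*)\to V_\rho^*$ is surjective at some point of $\H_2$. This is exactly the ``facts about vector valued modular forms'' half of the strategy. Because $\rho^*$ is trivial on a congruence subgroup $\Gamma'$, such forms can be manufactured from scalar modular forms for $\Gamma'$ (for instance theta series or Eisenstein and Poincaré series) by averaging over the finite group $\Gamma^{(2)}/\Gamma'$ to project onto the $\rho^*$-isotypic component, and the required non-vanishing of $\det G$ follows once $l$ is taken sufficiently large. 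I expect this existence-and-separation step to be the main obstacle; the propagation of convergence and the final application of the Koecher principle are technical but standard.
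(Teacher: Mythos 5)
Your overall strategy---pairing $f$ against holomorphic forms for the dual representation, applying Aoki's scalar theorem to the resulting symmetric scalar series $h=\langle g,f\rangle$, and recovering $f$ by inverting the matrix $G=(g_{ij})$---is exactly the route the paper takes (Lemma~\ref{lem:2} together with \cite{Ao}). The genuine gap sits in your final step, in two places. First, from the formal identity $f=G^{-1}h$ you conclude that ``the formal series $f$ converges on $U=\{\det G\neq 0\}$''; this is a non sequitur. Fixing $(\tau,z)$ and dividing the convergent $q'$-series $h$ by $G$ only yields convergence of $f$ on a disc in $q'$ reaching up to the first zero of $\det G(\tau,z;\,\cdot\,)$; points of $U$ lying beyond a component of the zero divisor of $\det G$ are not reached by this argument. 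Second, and more seriously, the claimed ``Koecher-type argument'' that a meromorphic modular form whose formal Fourier-Jacobi coefficients are holomorphic must itself be holomorphic is false as stated: already $1/(q'-c)$ has a formal $q'$-expansion with holomorphic (constant) coefficients and yet a genuine pole, with the series converging only for $|q'|<|c|$. Nothing in your argument excludes a polar component of $G^{-1}h$ along $\{\det G=0\}$, and the paper explicitly warns that the convergence locus of a formal Fourier-Jacobi expansion of a meromorphic form is subtle and is deliberately not addressed.

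The paper's device for avoiding exactly this issue is its Proposition~\ref{prop:global}, proved via the Satake compactification, Baily--Borel ampleness of $\calM_r$, and Serre's global-generation theorem: for \emph{every} point $a\in\H_2$ not fixed by $\Gamma^{(2)}/C$ there exist $g_1,\dots,g_d\in M^{(2)}_{k_0}(\rho^\vee)$ with $g(a)$ invertible. For each such $a$ one obtains a meromorphic modular form $\langle g^{-1},\langle g,f\rangle\rangle$ that is holomorphic \emph{near $a$} and whose formal Fourier-Jacobi expansion equals $f$; by injectivity of the formal expansion map these agree as $a$ varies, so $f$ converges and is holomorphic away from the fixed-point locus, which is then crossed without any propagation-of-convergence argument. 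In other words, you need your ``existence-and-separation'' step not at a single point but at every non-fixed point---and once you have that, the problematic Koecher step disappears entirely. Your averaging construction from scalar forms on a congruence subgroup could plausibly be upgraded to prove such a pointwise generation statement, but as written it only promises $\det G\not\equiv 0$, which is insufficient. On the positive side, your observation that modularity is automatic once convergence is known, because $\Sp_2(\Z)$ is generated by $j(\Gamma^J)$ together with $\delta$, is correct and is a legitimate alternative to the paper's mechanism of identifying $f$ with the formal expansion of a meromorphic modular form.
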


Such a result was first proved by Aoki in \cite{Ao}
for the {\em trivial\/} representation $\rho$. His proof relies on classical facts on Taylor expansions of Jacobi forms \cite{EZ} and comparisons of dimension formulas.
Variants for paramodular groups of small level and the trivial representation are proved in \cite[Theorem 1.2]{IPY}.
In a recent preprint \cite{Ra}, Raum proves Theorem \ref{thm:main} (for possibly non-trivial representations as in \eqref{eq:rho}), by employing the Hirzebruch-Riemann-Roch theorem and the Lefschetz fixed point formula, combined with asymptotic estimates for the dimensions of symmetric formal Fourier-Jacobi series.

In the present note we show that the full statement of Theorem \ref{thm:main}
can be derived from Aoki's original result for the scalar 
case by employing some
facts about vector valued Siegel modular forms and their Fourier-Jacobi expansions. We hope that this approach might also help
to attack the analogous problem in other situations (e.g.~in higher genus) by reducing the vector valued to the scalar case.

An important application of Theorem \ref{thm:main} is an analogue of the Gross-Kohnen-Zagier theorem \cite{GKZ} for codimension $2$ special cycles  on Shimura varieties associated with orthogonal groups of signature $(n,2)$. It states that the generating series of special cycles of codimension $2$ is a vector valued Siegel modular form for the group $\Gamma^{(2)}$ of weight $1+n/2$ with values in the second  Chow group. This result was conjectured (in greater generality) by Kudla in \cite{Ku:MSRI}, motivated by his joint work with Millson on geometric generating series, see e.g.~\cite{KM3}, \cite{Ku:Duke}.
Employing \cite{Bo3}, it was proved by Zhang \cite{Zh} that the generating series is a symmetric formal Fourier-Jacobi series.
Theorem \ref{thm:main} implies that it actually converges and therefore defines a Siegel modular form. We explain this application in Section \ref{sect:3}.

I thank Martin Raum and the referee for very helpful comments on this paper.

\section{Vector valued modular forms and the proof of Theorem \ref{thm:main}}
\label{sect:2}

Let $(\rho,V_\rho)$ and $(\sigma,V_\sigma)$ be finite dimensional representations of $\Gamma^{(2)}$.
Denote the canonical
bilinear pairing
$\Hom(V_\rho,V_\sigma)\times V_\rho\to V_\sigma$
by $\langle \lambda,v\rangle=\lambda(v)$ for $\lambda\in \Hom(V_\rho,V_\sigma)$ and $v\in V_\rho$.
The following two lemmas are easily seen.

\begin{lemma}
\label{lem:1}
Let  $f=\sum_{m\geq 0} \phi_m q'{}^m\in N_k^{(2)}(\rho)$ and
$g=\sum_{m\geq 0} \psi_m q'{}^m\in N_{k_1}^{(2)}(\sigma)$.
Then
\begin{align*}
f\otimes g&=\sum_{M\geq 0} \sum_{\substack{m,m_1\geq 0\\ m+m_1=M}}\phi_m \otimes \psi_{m_1} q'{}^M
\end{align*}
belongs to $N_{k+k_1}^{(2)}(\rho\otimes\sigma)$.
If $f$ and $g$ are symmetric, then $f\otimes g$ is symmetric as well.
\end{lemma}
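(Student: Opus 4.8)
The plan is to check both assertions directly against the definitions of $N_k^{(2)}(\rho)$ and $J_{k,m}(\rho)$, reducing everything to a statement about a single pair of summands. Write $\Phi_M=\sum_{m+m_1=M}\phi_m\otimes\psi_{m_1}$ for the $M$-th coefficient of $f\otimes g$, where $\phi_m\otimes\psi_{m_1}$ denotes the $V_\rho\otimes V_\sigma$-valued function $(\tau,z)\mapsto\phi_m(\tau,z)\otimes\psi_{m_1}(\tau,z)$. Since the index set $\{(m,m_1):m+m_1=M,\ m,m_1\geq 0\}$ is finite and $J_{k+k_1,M}(\rho\otimes\sigma)$ is a vector space, it suffices to prove $\phi_m\otimes\psi_{m_1}\in J_{k+k_1,\,m+m_1}(\rho\otimes\sigma)$ and then sum.

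First I would establish this membership. Passing to the functions on $\H_2$ attached by \eqref{jactrafo}, the factor $q'{}^{m+m_1}$ splits as $q'{}^m\cdot q'{}^{m_1}$, so the function associated to $\phi_m\otimes\psi_{m_1}$ is $\tilde\phi_m\otimes\tilde\psi_{m_1}$. For $(M,\alpha)\in j(\Gamma^J)$ the transformation laws \eqref{siegeltrafo} for $\tilde\phi_m$ (weight $k$, representation $\rho$) and $\tilde\psi_{m_1}$ (weight $k_1$, representation $\sigma$) simply multiply: the automorphy factors combine to $\alpha(Z)^{2(k+k_1)}$ and the representations to $(\rho\otimes\sigma)(M,\alpha)=\rho(M,\alpha)\otimes\sigma(M,\alpha)$, which is exactly \eqref{siegeltrafo} in weight $k+k_1$ with representation $\rho\otimes\sigma$. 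The only point that is not purely formal is holomorphy at the cusp: the Fourier coefficients of a product are convolutions, so I must verify that the support condition of \eqref{fourierphi} is inherited. For $N=n+n'$ and $R=r+s$ with $r^2\leq 4mn$ and $s^2\leq 4m_1 n'$, the bounds $|r|\leq 2\sqrt{mn}$ and $|s|\leq 2\sqrt{m_1 n'}$ give $(r+s)^2\leq 4mn+8\sqrt{mm_1nn'}+4m_1n'$, while $4mn'+4m_1n\geq 8\sqrt{mm_1nn'}$ by the arithmetic–geometric mean inequality; together these yield $R^2\leq 4(m+m_1)N$. Hence $\Phi_M\in J_{k+k_1,M}(\rho\otimes\sigma)$ and $f\otimes g\in N_{k+k_1}^{(2)}(\rho\otimes\sigma)$.

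For the symmetry assertion I would compute the Fourier coefficient as the double convolution $c(\Phi_M;N,R)=\sum_{m+m_1=M}\,\sum_{n+n_1=N,\ r+r_1=R} c(\phi_m;n,r)\otimes c(\psi_{m_1};n_1,r_1)$. Applying \eqref{fjsymmetry} for $f$ and for $g$ to each factor gives $c(\phi_m;n,r)=i^{2k}\rho(\delta)c(\phi_n;m,r)$ and $c(\psi_{m_1};n_1,r_1)=i^{2k_1}\sigma(\delta)c(\psi_{n_1};m_1,r_1)$. Tensoring, the scalars collect to $i^{2(k+k_1)}$ and the operators to $\rho(\delta)\otimes\sigma(\delta)=(\rho\otimes\sigma)(\delta)$, which factor out of the sum. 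The remaining sum $\sum c(\phi_n;m,r)\otimes c(\psi_{n_1};m_1,r_1)$ is, after relabelling the summation indices so that the former $q$-exponents $m,m_1$ become Fourier-Jacobi indices and the former indices $n,n_1$ become $q$-exponents, exactly $c(\Phi_N;M,R)$. This is precisely \eqref{fjsymmetry} for $f\otimes g$ in weight $k+k_1$ with representation $\rho\otimes\sigma$.

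I expect no serious obstacle, since the lemma is essentially bookkeeping guaranteed by the multiplicativity of automorphy factors and representations. The single computational step is the support estimate ensuring holomorphy at the cusp of the product, and the one place demanding care is the relabelling in the symmetry computation, where one must check that the roles of the index and the $q$-exponent are interchanged consistently and simultaneously in both tensor factors.
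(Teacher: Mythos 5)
Your proof is correct, and it is exactly the routine verification the paper intends: the paper gives no proof of this lemma at all, dismissing it with ``The following two lemmas are easily seen.'' Your two non-formal ingredients --- the support estimate $R^2\leq 4(m+m_1)N$ via the arithmetic--geometric mean inequality, and the simultaneous exchange of indices and $q$-exponents in both tensor factors in the symmetry computation --- are precisely the points one must check, and both are carried out correctly.
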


\begin{lemma}
\label{lem:2}
Let  $f=\sum_{m\geq 0} \phi_m q'{}^m\in N_k^{(2)}(\rho)$, and let
$g=\sum_{m\geq 0} \psi_m q'{}^m$
be a formal Fourier-Jacobi series of weight $k_1$ and representation
$\Hom(V_\rho,V_\sigma)$.
Then
\begin{align*}
\langle g,f\rangle &=\sum_{M\geq 0} \sum_{\substack{m,m_1\geq 0\\ m+m_1=M}}\langle\psi_{m_1},\phi_m\rangle q'{}^M
\end{align*}
belongs to $N_{k+k_1}^{(2)}(\sigma)$.
If $f$ and $g$ are symmetric, then $\langle f,g\rangle $ is symmetric as well.
\end{lemma}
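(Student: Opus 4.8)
The plan is to reduce the whole statement to a multiplicativity property of individual Jacobi forms, the driving mechanism being the compatibility of the evaluation pairing with the group action. Recall that the natural action of $\Gamma^{(2)}$ on $\Hom(V_\rho,V_\sigma)$ is $\gamma\cdot\lambda=\sigma(\gamma)\circ\lambda\circ\rho(\gamma)^{-1}$, and that with respect to it the pairing is equivariant:
\[
\langle \gamma\cdot\lambda,\ \rho(\gamma)v\rangle=\sigma(\gamma)\,\langle\lambda,v\rangle,
\qquad \gamma\in\Gamma^{(2)},\ \lambda\in\Hom(V_\rho,V_\sigma),\ v\in V_\rho.
\]
This single identity yields both the modularity and the symmetry assertion, so the first thing I would record is this equivariance (it is immediate from the definition of the $\Hom$-action).

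First I would establish the basic building block: if $\psi\in J_{k_1,m_1}(\Hom(V_\rho,V_\sigma))$ and $\phi\in J_{k,m}(\rho)$, then $\langle\psi,\phi\rangle\in J_{k+k_1,m+m_1}(\sigma)$. Passing to the Siegel picture \eqref{jactrafo}, bilinearity of the pairing gives $\widetilde{\langle\psi,\phi\rangle}=\langle\tilde\psi,\tilde\phi\rangle$, since the factors $q'{}^{m_1}$ and $q'{}^{m}$ combine to $q'{}^{m+m_1}$. For $(M,\alpha)\in j(\Gamma^J)$ I would then apply the two instances of \eqref{siegeltrafo} for $\tilde\psi$ and $\tilde\phi$ together with the equivariance above to get
\[
\langle\tilde\psi,\tilde\phi\rangle(MZ)
=\alpha(Z)^{2(k+k_1)}\big\langle (M,\alpha)\cdot\tilde\psi(Z),\ \rho(M,\alpha)\tilde\phi(Z)\big\rangle
=\alpha(Z)^{2(k+k_1)}\sigma(M,\alpha)\,\langle\tilde\psi,\tilde\phi\rangle(Z),
\]
which is exactly \eqref{siegeltrafo} for $\sigma$ in weight $k+k_1$. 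Holomorphy of $\langle\psi,\phi\rangle$ is clear from bilinearity of the pairing.

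For the cusp condition I would use that the $(n,r)$-coefficient of a Jacobi form of index $m$ is supported where the half-integral matrix $\kzxz{n}{r/2}{r/2}{m}$ is positive semidefinite, which is precisely the condition $r^2\le 4mn$ of \eqref{fourierphi}. The Fourier support of $\langle\psi,\phi\rangle$ is contained in the Minkowski sum of the supports of $\psi$ and $\phi$; since a sum of positive semidefinite matrices is positive semidefinite, with lower-right entry $m+m_1$, the product satisfies $R^2\le 4(m+m_1)N$. Summing over $m+m_1=M$ shows that the $M$-th Fourier–Jacobi coefficient of $\langle g,f\rangle$ lies in $J_{k+k_1,M}(\sigma)$, whence $\langle g,f\rangle\in N_{k+k_1}^{(2)}(\sigma)$.

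Finally, for symmetry I would expand the $(N,R)$-coefficient of the $M$-th Fourier–Jacobi coefficient of $\langle g,f\rangle$ as
\[
\sum_{m+m_1=M}\ \sum_{\substack{n_1+n=N\\ r_1+r=R}}\big\langle c(\psi_{m_1};n_1,r_1),\ c(\phi_m;n,r)\big\rangle,
\]
substitute the symmetry relations \eqref{fjsymmetry} for $f$ and for $g$, and pull the common scalar $i^{2(k+k_1)}$ and the operator $\sigma(\delta)$ out through the pairing by the equivariance identity applied to $\gamma=\delta$. Relabelling the double sum, interchanging in each factor the index with the first Fourier variable, then identifies the remaining sum as the analogous coefficient with $M$ and $N$ swapped, giving \eqref{fjsymmetry} for $\langle g,f\rangle$. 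The computations are routine once the equivariance is in place; the only point demanding care is the bookkeeping in this last step — verifying that the relabelling genuinely produces the $(M,R)$-coefficient of the $N$-th term and that the $\delta$-action used on $\Hom(V_\rho,V_\sigma)$ is exactly the one for which the pairing is equivariant. I expect no obstacle beyond this, in keeping with the lemma being ``easily seen.''
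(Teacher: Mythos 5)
Your proposal is correct. The paper offers no proof of this lemma (it is dismissed with ``The following two lemmas are easily seen''), and your argument --- the equivariance identity $\langle \gamma\cdot\lambda,\rho(\gamma)v\rangle=\sigma(\gamma)\langle\lambda,v\rangle$ driving both the transformation law in weight $k+k_1$ and the symmetry relation \eqref{fjsymmetry} via $\gamma=\delta$, together with the positive-semidefinite support bookkeeping for the cusp condition --- is precisely the routine verification the authors intend, so there is nothing to compare beyond noting that the $\langle f,g\rangle$ in the lemma's last sentence is a typo for $\langle g,f\rangle$.
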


For the trivial representation, the direct sum over $k\in \Z_{\geq 0}$ of all spaces $N_k^{(2)}$
is a graded algebra with the multiplication of Lemma \ref{lem:1}. Any $f\in N_k^{(2)}$
has a multiplicative inverse in the field $\calM(\H_1\times \C)((q'))$ of
formal Laurent series with coefficients in the field $\calM(\H_1\times \C)$ of meromorphic functions on $\H_1\times\C$.
It is easily seen that the $m$-th coefficient of the inverse is a meromorphic Jacobi form of weight $-k$  and index $m$.

\begin{definition}
\label{def:merfj}
Let $f$ be a meromorphic modular form for $\Gamma^{(2)}$ of weight $k$ with representation $\rho$. Write $f=g/h$ with $g\in M_{k+k_1}^{(2)}(\rho)$ and $h\in M_{k_1}^{(2)}$. We define the {\em formal} Fourier-Jacobi expansion of $f$ as the product of the Fourier-Jacobi expansion of $g$ with the inverse of the Fourier-Jacobi expansion of $h$ in $\calM(\H_1\times\C)((q'))$. Its $m$-th coefficient is a meromorphic Jacobi form of weight $k$ for $\Gamma^{(2)}$ with representation $\rho$.
\end{definition}

Our assumption that $\rho$ is trivial on some congruence subgroup implies that it is always possible to find $g$ and $h$ as required. The definition is independent of their choice. The map taking a meromorphic modular form to its formal Fourier-Jacobi expansion is injective. Note that the question where the formal Fourier-Jacobi expansion actually converges is subtle. We do not address it here. Moreover, note that our approach to
(formal) Fourier-Jacobi expansions of meromorphic Siegel modular forms is different from the one pursued in \cite{DMZ} by means of Fourier integrals along suitably chosen contours.

The center $C=\{(\pm 1,\pm 1)\}\subset \Gamma^{(2)}$ acts trivially on $\H_2$. We consider the $\Gamma^{(2)}$-invariant subspace
\begin{align*}
V_\rho(k) = \{v\in V_\rho:\; \text{$\rho(-1,1)(v)=v=(-1)^{2k}\rho(1,-1)(v)$} \}\subset V_\rho.
\end{align*}
The transformation law \eqref{siegeltrafo} for $j((-1,i),0)$ and the symmetry relation \eqref{fjsymmetry} imply that any $f\in N_k^{(2)}(\rho)$ actually takes values in $V_\rho(k)$.

\begin{proposition}
\label{prop:global}
There exists a positive  $k_0\in k+\Z$ with the following property:
For every $a\in \H_2$ which is not a fixed point of the action of $\Gamma^{(2)}/C$, there are
modular forms $g_1,\dots,g_d\in M^{(2)}_{k_0}(\rho)$
whose values $g_1(a),\dots, g_d(a)$ generate the space $V_\rho(k)$.
 \end{proposition}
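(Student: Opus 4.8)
The plan is to exploit the hypothesis that $\rho$ is trivial on a normal congruence subgroup $\Gamma'\subset\Gamma^{(2)}$ of finite index. This lets me manufacture vector valued modular forms out of \emph{scalar} ones and thereby reduce the assertion to a point separation statement for scalar forms of high weight. Concretely, for a scalar modular form $h$ of weight $k_0$ on $\Gamma'$ and a vector $v\in V_\rho$ I would form the average
\[
g_{h,v}(Z)=\sum_{\gamma\in\Gamma'\backslash\Gamma^{(2)}}(h|_{k_0}\gamma)(Z)\,\rho(\gamma)^{-1}v .
\]
Since $[\Gamma^{(2)}:\Gamma']<\infty$ this is a finite sum; because $h$ is $\Gamma'$-modular and $\rho$ is trivial on $\Gamma'$, each summand depends only on the coset $\Gamma'\gamma$, and a short computation with the cocycle shows $g_{h,v}\in M_{k_0}^{(2)}(\rho)$. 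Every element of $M_{k_0}^{(2)}(\rho)$ takes values in $V_\rho(k_0)$, and since $k_0\in k+\Z$ one has $V_\rho(k_0)=V_\rho(k)$, so these forms automatically land in the target space.

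Next I would reduce surjectivity of the evaluation map to a spanning condition. Writing $N=[\Gamma^{(2)}:\Gamma']$, the value at $a$ is
\[
g_{h,v}(a)=\left(\sum_{\gamma}(h|_{k_0}\gamma)(a)\,\rho(\gamma)^{-1}\right)v=T_h\,v ,
\]
where $T_h$ depends $\C$-linearly on $h$. If the coefficient vectors $\big((h|_{k_0}\gamma)(a)\big)_\gamma\in\C^{N}$ span $\C^{N}$ as $h$ ranges over the scalar forms of weight $k_0$, then the linear span of the operators $T_h$ contains each $\rho(\gamma)^{-1}$ separately, in particular $\rho(e)^{-1}=\id$. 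For the corresponding $h$ one gets $g_{h,v}(a)=v$ for every $v$, and choosing $v=v_1,\dots,v_d$ to be a basis of $V_\rho(k)$ produces forms $g_i=g_{h,v_i}$ whose values $g_i(a)$ generate $V_\rho(k)$. Thus everything comes down to realizing arbitrary prescribed values at the finite set of points $\{\gamma a\}$ by scalar modular forms of a fixed large weight.

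The input for this last step is the scalar theory in genus $2$. First one must ensure the relevant points are genuinely distinct on a quotient. Since the center $C=\{(\pm1,\pm1)\}$ acts trivially on $\H_2$, I would organize the averaging by cosets of $\Gamma' C$ rather than $\Gamma'$, absorbing the $C$-action into the fixed central character by which $C$ acts on $V_\rho(k)$. Because $a$ is not a fixed point of $\Gamma^{(2)}/C$, the images of $a$ under coset representatives of $(\Gamma' C)\backslash\Gamma^{(2)}$ are then pairwise distinct interior points of $(\Gamma' C)\backslash\H_2$. The Satake--Baily--Borel compactification of this quotient is a normal projective variety on which the line bundle of modular forms (twisted by the fixed finite-order central character) is ample; a sufficiently high power is $N$-very ample, so for $k_0$ large the evaluation map from weight-$k_0$ scalar forms to the sum of the fibers over any $N$ distinct interior points is surjective. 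As $N$ is fixed and this holds uniformly over all configurations of $N$ distinct points, a single positive $k_0\in k+\Z$ works for every non-fixed $a$ simultaneously.

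The main obstacle I anticipate is precisely this uniform point separation by high-weight scalar forms, together with the bookkeeping of the center: one must check that the central character of $h$ matches the action of $C$ on $V_\rho(k)$ so that the averaged sum is well defined over $(\Gamma' C)\backslash\Gamma^{(2)}$, and that the ampleness on the Satake compactification yields not merely separation of points but surjectivity onto the fibers at a fixed number of points for one weight independent of $a$. Once this is in place, the averaging construction transports the scalar result to the vector valued setting and the proposition follows.
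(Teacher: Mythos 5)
Your proof is correct in outline, and it takes a genuinely different route from the paper's. The paper argues sheaf-theoretically on the Satake compactification $X$ of $\Gamma^{(2)}\backslash\H_2$: since $\calM_r$ is very ample for suitable $r$ (Baily--Borel), Serre's theorem makes the coherent sheaf $\calM_{r+k}(\rho)=\calM_r\otimes\calM_k(\rho)$ globally generated for $r\gg 0$ divisible by $r_0$, and because the stalk at a non-elliptic point $a$ is $V_\rho(k)\otimes_\C\calO_{X,a}$, generating global sections evaluate at $a$ to a generating set of $V_\rho(k)$. You instead descend to a congruence subgroup $\Gamma'$ on which $\rho$ is trivial, transfer scalar forms upward by the finite average $g_{h,v}=\sum_\gamma (h|_{k_0}\gamma)\,\rho(\gamma)^{-1}v$, and reduce everything to interpolating prescribed values at the $N$ points $\gamma a$, which are pairwise distinct in $\Gamma'C\backslash\H_2$ precisely because $a$ is not a fixed point of $\Gamma^{(2)}/C$; extracting $\rho(e)^{-1}=\id$ from the span of your operators $T_h$ is correct linear algebra, and the uniform-in-$a$ interpolation at $N$ distinct points does hold (for instance by restricting products of $N-1$ hyperplane sections under the very ample embedding of the Satake compactification of $\Gamma'\backslash\H_2$), so a single $k_0$ works. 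This is in effect a finite-coset version of the Poincar\'e series alternative that the paper explicitly attributes to Freitag (Theorem~4.4 of \cite[Chapter~I]{Fr}). What each approach buys: the paper's proof is shorter and handles half-integral $k$ without effort, since the fractional weight is carried inside $\calM_k(\rho)$ while the twisting line bundle has integral weight; your proof is more constructive and reduces the statement to scalar interpolation, but it uses the same deep input (very ampleness on the Satake compactification) and requires bookkeeping you only sketch. In particular, when $k\notin\Z$ you need an adequate supply of genuinely half-integral weight scalar forms on $\Gamma'$ with the metaplectic multiplier (so $\Gamma'$ must be chosen deep enough, with $(1,-1)\notin\Gamma'$), your central-character matching for the cosets of $\Gamma'C$ must be checked against the sign $(-1)^{2(k_0+k)}=1$ coming from $k_0\in k+\Z$, and you must pass from the natural interpolation weight (a multiple of the very ample integral weight) to the exact weight $k_0\in k+\Z$ by multiplying with a form of the complementary, possibly half-integral, weight nonvanishing at the points $\gamma a$. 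These are standard repairs rather than gaps, but they are exactly the frictions the paper's sheaf-theoretic formulation avoids.
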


\begin{proof}
Let $X$ be the Satake compactification of $\Gamma^{(2)}\bs \H_2$. For $r\in \frac{1}{2}\Z$ we let $\calM_r$ be the sheaf of scalar valued modular forms of weight $r$  on $X$, and
let $\calM_r(\rho)$ be the sheaf of modular forms of weight $r$ with representation $\rho$ on $X$. These sheaves are coherent $\calO_X$-modules and
$\calM_{r+k}(\rho)=\calM_r\otimes \calM_k(\rho)$. By the work of Satake and Baily-Borel, there exists a positive integer $r_0$ such that $\calM_r$ is a very ample line bundle for all positive integers $r$ which are divisible by $r_0$.
Hence, according to a theorem of Serre (see e.g.~Theorem~5.17 in \cite[Chapter~II]{Ha}), the sheaf $\calM_{r+k}(\rho)$ is generated by global sections for all integers $r$ which are divisible by $r_0$ and sufficiently large. We fix such an $r$ and put $k_0=r+k$.

If $a$ is not an elliptic fixed point, then the stalk $\calM_{k_0,a}(\rho)$ at $a$ is equal to $V_\rho(k)\otimes_\C \calO_{X,a}$. By the assumption on $k_0$ there exist global sections of $\calM_{k_0}(\rho)$ which generate the stalk $\calM_{k_0,a}(\rho)$. Their values at $a$ must generate $V_\rho(k)$.
\end{proof}

E.~Freitag has pointed out that this proposition could also be proved using Poincar\'e series as in Theorem~4.4 of \cite[Chapter~I]{Fr}.

\begin{proof}[Proof of Theorem \ref{thm:main}]
Let $f=\sum_m \phi_m q'{}^m\in N_k^{(2)}(\rho)$.
By replacing $\rho$ by its restriction to $V_\rho(k)$, we may assume without loss of generality that $V_\rho=V_\rho(k)$.

Put $d=\dim(V_\rho)$, and
let $k_0\in k+\Z$ be positive number which has the property of Proposition~\ref{prop:global} for the dual representation $\rho^\vee$.
Let $a\in \H_2$ be a point which is not a fixed point of the action of $\Gamma^{(2)}/C$.
We choose $g_1,\dots,g_d\in M^{(2)}_{k_0}(\rho^\vee)$
such that the vectors $g_1(a),\dots,g_d(a)$ form a basis of $V_\rho^\vee$.
The $d$-tuple
\[
g={}^t(g_1,\dots,g_d)
\]
defines a vector valued Siegel modular form with values in $\Hom(V_\rho,\C^d)$ transforming with the representation induced by $\rho$ and the
trivial representation $\rho_0^d$ on $\C^d$.
The pairing
\[
h=\langle g,f\rangle
\]
as in Lemma~\ref{lem:2}
defines an element on $N_{k+k_0}^{(2)}(\rho_0^d)$,
that is, a $d$-tuple of scalar formal Fourier-Jacobi series of weight $k+k_0$.
Since $f$ and $g$ are symmetric, $h$ is also symmetric by Lemma~\ref{lem:2}. According to \cite{Ao} (Theorem \ref{thm:main} for the trivial representation),
$h$ is the Fourier-Jacobi expansion of some Siegel modular form in $M_{k+k_0}^{(2)}(\rho_0^d)$, which we also denote by $h$.

For $Z\in\H_2$ the value $g(Z)\in \Hom(V_\rho,\C_d)$ is an invertible linear map if and only if $\det(g(Z))\neq 0$.
Since $g_1(a),\dots,g_d(a)$ are linearly independent, $g(Z)$ is invertible in a neighborhood of $a$.
Consequently, the assignment
$Z\mapsto g(Z)^{-1}$
defines a {\em meromorphic} modular form $g^{-1}$ for $\Gamma^{(2)}$ of weight $-k_0$ with values in  $\Hom(\C^d,V_\rho)$.

The natural pairing
\[
\langle g^{-1},h\rangle = \langle g^{-1},\langle g,f\rangle \rangle
\]
is a meromorphic Siegel modular form for $\Gamma^{(2)}$ of weight $k$ with representation $\rho$,
whose formal Fourier-Jacobi expansion in the sense of Definition \ref{def:merfj} has to agree with $f$.
It is holomorphic in a neighborhood of $a$.
Varying the point $a$ and the corresponding modular forms $g_\nu$, we find that $f$ is holomorphic on the complement of the set of elliptic fixed points.

By a standard argument, we get a holomorphic continuation of $f$ over the fixed point
manifolds\footnote{By \cite[Theorem 6]{St}, for general $g\geq 2$, the minimal codimension of fixed point manifolds of the action of $\Sp_g(\Z)$ on $\H_g$ is $g-1$.}
of codimension $>1$ of elements of $\Sp_2(\Z)$.
To get a continuation over the fixed point manifolds of codimension $1$, we note that by \cite[\S 7.3]{St} the group
$\Sp_2(\Z)$ has up to conjugation only the two elements
$\kzxz{H}{0}{0}{H}$ and $\kzxz{H}{J}{0}{H}$ whose fixed point manifolds have codimension $1$.
Here $H=\kzxz{1}{0}{0}{-1}$ and $J=\kzxz{0}{-1}{1}{0}$. The corresponding fixed point manifolds are given by the divisors $D_1=\{z=0\}$ and $D_2=\{z=1/2\}$ on $\H_2$.

Now there exists a holomorphic scalar valued Siegel modular form $u$ with divisor supported on the $\Sp_2(\Z)$-translates of $D_1+D_2$ and a holomorphic $V_\rho$-valued modular form $v$ such that $u\cdot f = v$.
All Fourier-Jacobi coefficients of $u$ must vanish along the divisor $\{z=0\}$ on $\H_1\times\C$ with order at least $\ord_{D_1}(u)$. Since the Fourier-Jacobi coefficients of $f$ are holomorphic, we find that all Fourier-Jacobi coefficients of $v$ have to vanish along $\{z=0\}$ with order at least $\ord_{D_1}(u)$. But this implies that $f$ is holomorphic along $D_1$. Analogously it is holomorphic along $D_2$.

Consequently, the formal Fourier-Jacobi series $f$ is the Fourier-Jacobi expansion of an
element of $M_k^{(2)}(\rho)$.
\end{proof}

\begin{remark}
For simplicity we stated Theorem \ref{thm:main} and its proof for vector valued Fourier-Jacobi series with scalar $K$-type $\det^{k}$ at $\infty$.
The above proof can be generalized to vector valued $K$-type $\det^{k}\otimes \Sym^j$ at $\infty$ in a straightforward way.
\end{remark}

\section{Generating series of special cycles}
\label{sect:3}

Here we describe the application of Theorem \ref{thm:main} to Kudla's
modularity conjecture, see \cite[Section 3, Problem 1]{Ku:MSRI}.

Let $(V,Q)$ be a quadratic space over $\Q$ of signature $(n,2)$. The
hermitian symmetric space corresponding to the orthogonal group of $V$
can be realized as a connected component $\calD^+$ of the complex
manifold
\[
\calD=\{[z]\in P(V(\C)):\; \text{$(z,z)=0$ and $(z,\bar z)<0$}\}.
\]
Here $P(V(\C))$ denotes the projective space of
$V(\C)=V\otimes_\Q\C$. Let $L\subset V$ be an even lattice, and write $L'$
for its dual.  Let $\Orth(L)$ be the orthogonal group of $L$, and let
$\Gamma_L\subset \Orth(L)$ be a subgroup of
finite index which acts trivially on $L'/L$ and which takes $\calD^+$
to itself. By the theory of Baily-Borel the quotient $X(\Gamma_L)= \Gamma_L\bs \calD^+$ has a structure as
a quasi-projective algebraic variety.
The
tautological line bundle $\calL$ over $\calD$ descends to a line
bundle over $X(\Gamma_L)$, the line bundle of modular forms of weight
$1$.

Special cycles on $X(\Gamma_L)$ can be defined as follows, see \cite{Ku:Duke}, \cite{Ku:MSRI}.
For $1\leq r\leq n$, let $S _{L,r}$ be the complex vector space of
functions $\varphi:(L'/L)^r\to \C$. Recall that there is a Weil
representation
\[
\omega_{L,r}:\Gamma^{(r)}\longrightarrow \GL(S_{L,r}).
\]
If $n$ is even, it is a subrepresentation of the restriction to
$\Sp_r(\Z)$ of the usual Weil representation of $\Sp_r(\hat\Q)$ on the
space of Schwartz-Bruhat functions on $V(\hat\Q)^r$. If $n$ is odd, in
addition the cocycles have to be matched, see e.g.~\cite{Ku:Integrals}
for the case $r=1$, the case of general $r$ is analogous.

For an $r$-tuple $\lambda=(\lambda_1,\dots,\lambda_r)\in V^r$  we let
$Q(\lambda)=\frac{1}{2}((\lambda_i,\lambda_j))_{i,j}\in \Q^{r\times r}$ be the corresponding matrix of inner products.
If $Q(\lambda)$ is positive semidefinite of rank $r(\lambda)\in \{0,\dots,r\}$, then
\[
Z(\lambda)=\{[z]\in \calD^+: \; (z,\lambda_1)=\ldots=(z,\lambda_r)=0\}
\]
is a submanifold of codimension $r(\lambda)$. If $Q(\lambda)$ is not positive semidefinite, then $\calD^+(\lambda)=\emptyset$. For every positive semidefinite symmetric matrix $T\in\Q^{r\times r} $ of rank $r(T)$, and for $ \varphi \in S_{L,r}$, we define a cycle
\[
Z(T,\varphi)=\sum_{\substack{\lambda\in L'{}^r\\Q(\lambda)=T}}\varphi(\lambda) \cdot Z(\lambda)
\]
of codimension $r(T)$ with complex coefficients. By reduction theory, it descends to an algebraic
cycle on the quotient $X(\Gamma_L)$, which we also denote by
$Z(T,\varphi)$.  If the denominators of the entries of $T$ do not
divide the level of $L$, then $Z(T)=\emptyset$.  By taking the
intersection pairing $Z(T,\varphi)\cdot (\calL^\vee)^{r-r(T)}$ in the sense of \cite[Chapter 2.5]{Fu} with a power of the dual bundle of $\calL$, we
obtain a cycle class of codimension $r$, that is, an element of
$\CH^r(X(\Gamma_L))_\C$.
We write
$Z(T)$ for the element
\[
\varphi\mapsto  Z(T,\varphi)\cdot (\calL^\vee)^{r-r(T)}
\]
of $\Hom(S_{L,r}, \CH^{r}(X(\Gamma_L))_\C)$.

\begin{conjecture}[Kudla]
\label{conj:kudla}
The formal generating series
\[
A_r(Z)=\sum_{\substack{T\in \Q^{r\times r}\\ T\geq 0}} Z(T) \cdot  q^T,
\]
valued in $S_{L,r}^\vee\otimes_\C \CH^{r}(X(\Gamma_L))_\C$, is a Siegel modular form of genus $r$ in $M_{1+n/2}^{(r)}(\omega_{L,r}^\vee)$ with values in $\CH^{r}(X(\Gamma_L))_\C$. Here we have put $q^T=e^{2\pi i \tr(T Z)}$ for $Z\in \H_r$.
\end{conjecture}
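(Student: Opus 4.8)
The plan is to establish the conjecture in the codimension $r=2$ case, which is the situation accessible through Theorem~\ref{thm:main}; for general $r$ one would first need the analogue of that theorem in genus $r$, which is not available here. The first reduction is to pass from the Chow-group-valued series $A_2$ to scalar ones. Since $\CH^2(X(\Gamma_L))_\C$ need not be finite dimensional, I would not apply Theorem~\ref{thm:main} to $A_2$ directly. Instead, for every $\C$-linear functional $\ell\in \Hom(\CH^2(X(\Gamma_L))_\C,\C)$ I would form
\[
\ell\circ A_2 = \sum_{\substack{T\in \Q^{2\times 2}\\ T\geq 0}} \ell(Z(T))\, q^T,
\]
a formal series valued in the \emph{finite dimensional} space $S_{L,2}^\vee$, on which $\omega_{L,2}^\vee$ acts. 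By definition, the Chow-valued series $A_2$ is a modular form precisely when $\ell\circ A_2$ is a genuine element of $M^{(2)}_{1+n/2}(\omega_{L,2}^\vee)$ for every such $\ell$; so it suffices to establish modularity of each $\ell\circ A_2$ and to observe that the resulting forms depend linearly on $\ell$ and hence reassemble into a $\CH^2(X(\Gamma_L))_\C$-valued form.

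Next I would verify that each $\ell\circ A_2$ is a symmetric formal Fourier-Jacobi series in $N^{(2)}_{1+n/2}(\omega_{L,2}^\vee)$. The representation $\omega_{L,2}^\vee$ is unitary and factors through a finite quotient of $\Gamma^{(2)}$, so it meets the standing hypotheses on $\rho$, and the weight $k=1+n/2$ lies in $\tfrac{1}{2}\Z$. The substantive input is the work of Zhang~\cite{Zh}: relying on Borcherds' modularity theorem~\cite{Bo3} for the codimension-$1$ generating series of special divisors, Zhang shows that the Fourier-Jacobi coefficients of the codimension-$2$ generating series are Jacobi forms of weight $1+n/2$ and the appropriate index with representation $\omega_{L,2}^\vee$. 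Concretely, one identifies each Fourier-Jacobi coefficient with a generating series of special divisors on a sub-datum of smaller rank, together with a suitable intersection product against powers of $\calL^\vee$, to which Borcherds' result applies; composing with $\ell$ preserves this automorphy. Thus $\ell\circ A_2$ is a formal Fourier-Jacobi series.

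The symmetry relation~\eqref{fjsymmetry} should be read off from the geometry of the cycle classes: the cycle $Z(T,\varphi)$ depends only on the $\GL_2(\Z)$-equivalence class of $T$, and the action of $\delta$ matching $T=\kzxz{n}{r/2}{r/2}{m}$ with its transpose translates exactly into the required identity among the coefficients $\ell(Z(T))$, so each $\ell\circ A_2$ is symmetric. Having placed $\ell\circ A_2$ in the symmetric part of $N^{(2)}_{1+n/2}(\omega_{L,2}^\vee)$, I would invoke Theorem~\ref{thm:main} to conclude that it is the Fourier-Jacobi expansion of a genuine form in $M^{(2)}_{1+n/2}(\omega_{L,2}^\vee)$; in particular it converges absolutely. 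Letting $\ell$ vary and using linearity then upgrades $A_2$ itself to a $\CH^2(X(\Gamma_L))_\C$-valued Siegel modular form, as claimed for $r=2$.

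The main obstacle is not in the formal manipulations above but in the geometric input from Zhang and Borcherds, namely proving that the Fourier-Jacobi coefficients of $A_2$ are Jacobi forms at all. This is where the genuine arithmetic geometry enters: one must reduce the codimension-$2$ situation to the codimension-$1$ modularity of Borcherds, control the intersection pairings with powers of $\calL^\vee$, and handle the rank-lowering and boundary contributions carefully. By contrast, once formal modularity and symmetry are in hand, the passage from a formal Fourier-Jacobi series to an honest modular form is supplied cleanly by Theorem~\ref{thm:main}, which is precisely the role it plays here.
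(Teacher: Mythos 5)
Your proposal follows essentially the same route as the paper: its proof of the $r=2$ case (Theorem \ref{thm:kudla}) likewise consists of citing Zhang \cite{Zh} (who uses Borcherds \cite{Bo3}) for the fact that $A_2$ is a symmetric formal Fourier-Jacobi series in $N^{(2)}_{1+n/2}(\omega_{L,2}^\vee)$ and then invoking Theorem \ref{thm:main}, with the general-$r$ statement remaining conjectural. Your reduction via linear functionals $\ell$ on $\CH^{2}(X(\Gamma_L))_\C$ correctly makes explicit how the finite-dimensionality hypothesis of Theorem \ref{thm:main} is met --- a point the paper leaves implicit --- but this is a detail of the same argument rather than a genuinely different approach.
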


For $r=1$ this conjecture was proved by Borcherds in \cite{Bo3}.

\begin{theorem}
\label{thm:kudla}
For $r=2$ the conjecture is true.
\end{theorem}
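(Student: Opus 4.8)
The plan is to combine the two results quoted in the introduction: Zhang's theorem \cite{Zh}, which rests on Borcherds' genus-one modularity \cite{Bo3} and asserts that the generating series $A_2$ is a \emph{symmetric formal Fourier-Jacobi series}, together with Theorem~\ref{thm:main}, which upgrades any such series to an honest Siegel modular form. First I would check that the representation $\omega_{L,2}^\vee$ meets the hypotheses of Theorem~\ref{thm:main}: the Weil representation is unitary, and since $L'/L$ is finite it factors through a finite quotient of $\Gamma^{(2)}$, hence is trivial on a congruence subgroup of sufficiently large level. The weight is $k=1+n/2$, which is a genuine half-integer exactly when $n$ is odd, so the metaplectic framework set up in the paper is precisely what is required.

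The one point needing care is that the coefficients $Z(T)$ of $A_2$ take values in $\CH^2(X(\Gamma_L))_\C$, a space that is not a priori finite dimensional, whereas Theorem~\ref{thm:main} is formulated over a finite dimensional representation space. To bridge this I would compose with linear functionals. For each $\ell\in \Hom_\C(\CH^2(X(\Gamma_L))_\C,\C)$ the series $\ell\circ A_2$ has coefficients in the finite dimensional space $S_{L,2}^\vee$ and transforms under $\omega_{L,2}^\vee$. Applying $\ell$ commutes with the Jacobi transformation law and with the symmetry relation \eqref{fjsymmetry}, so Zhang's theorem yields $\ell\circ A_2\in N_{1+n/2}^{(2)}(\omega_{L,2}^\vee)$ symmetric, and Theorem~\ref{thm:main} then shows that $\ell\circ A_2$ converges and is the Fourier-Jacobi expansion of a modular form in $M_{1+n/2}^{(2)}(\omega_{L,2}^\vee)$.

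It remains to reassemble the scalarized statements into the vector valued conclusion, and here I would exploit that $M_{1+n/2}^{(2)}(\omega_{L,2}^\vee)$ is finite dimensional. Fixing a basis $h_1,\dots,h_D$ with Fourier coefficients $a_j(T)\in S_{L,2}^\vee$, the modularity of $\ell\circ A_2$ for every $\ell$ produces scalars $c_j(\ell)$ with $\ell\circ Z(T)=\sum_j c_j(\ell)\,a_j(T)$. Since a modular form is already determined by its coefficients at finitely many indices $T$, each $c_j(\ell)$ depends only on the values of $\ell$ on finitely many of the classes $Z(T)$, so $c_j$ is evaluation against a fixed class $w_j\in \CH^2(X(\Gamma_L))_\C$; as linear functionals separate points, this forces $Z(T)=\sum_{j=1}^D a_j(T)\otimes w_j$ for all $T$, that is, $A_2=\sum_{j=1}^D h_j\otimes w_j$. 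In particular the span of all codimension-$2$ special cycle classes is finite dimensional, and $A_2$ lies in $M_{1+n/2}^{(2)}(\omega_{L,2}^\vee)\otimes_\C \CH^2(X(\Gamma_L))_\C$, which is exactly Conjecture~\ref{conj:kudla} for $r=2$.

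The genuinely hard content is external to this argument: the fact that $A_2$ is a symmetric formal Fourier-Jacobi series is Zhang's theorem, whose proof reduces the Fourier-Jacobi coefficients to codimension-one data and invokes Borcherds' modularity in genus one, while convergence is supplied by Theorem~\ref{thm:main}. Granting these, the only real obstacle in the present step is the finite dimensional reduction of the Chow-group coefficients described above; everything else is a formal manipulation of the two cited results via Lemma~\ref{lem:2}.
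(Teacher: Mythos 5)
Your proposal is correct and follows essentially the same route as the paper: the paper's proof is exactly the two-step argument of citing Zhang \cite{Zh} for the statement that $A_2$ is a symmetric formal Fourier-Jacobi series in $N^{(2)}_{1+n/2}(\omega_{L,2}^\vee)$ with values in $\CH^{2}(X(\Gamma_L))_\C$, and then invoking Theorem~\ref{thm:main}. Your additional reduction handling the a priori infinite dimensionality of $\CH^{2}(X(\Gamma_L))_\C$ --- scalarizing by linear functionals, applying Theorem~\ref{thm:main} to each $\ell\circ A_2$, and reassembling via the finite dimensionality of $M^{(2)}_{1+n/2}(\omega_{L,2}^\vee)$ so that $A_2=\sum_j h_j\otimes w_j$ with fixed classes $w_j$ --- is a correct and worthwhile elaboration of a point the paper's two-line proof leaves implicit.
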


 \begin{proof}
 Zhang showed in \cite{Zh} that $A_2(Z)$ is a formal Fourier-Jacobi series  in $N_{1+n/2}^{(2)}(\omega_{L,2}^\vee)$ with values in $\CH^{2}(X(\Gamma_L))_\C$.
 Theorem \ref{thm:main} therefore implies the assertion.
 \end{proof}


\end{document}